\documentclass[11pt, oneside]{amsart}

\usepackage{tabularx, hyperref}
\usepackage{amssymb} \usepackage{amsfonts} \usepackage{amsmath}
\usepackage{amsthm} \usepackage{epsfig, subfig}
\usepackage{ amscd, amsxtra, latexsym}
\usepackage[all]{xy}
\usepackage{caption}
\usepackage{enumerate}
\usepackage{color}

\addtolength{\captionmargin}{1cm}

\newtheorem{lemma}{Lemma}[section]
\newtheorem{thm}[lemma]{Theorem}
\newtheorem{prop}[lemma]{Proposition}

\theoremstyle{definition}
\newtheorem{defn}[lemma]{Definition}

\newtheorem{rem}[lemma]{Remark}
\newtheorem{conv}[lemma]{Convention}
\theoremstyle{definition}
\newtheorem*{claim}{Claim}

\definecolor{darkgreen}{cmyk}{1,0,1,.2}

\newcommand{\g} {\ensuremath {\gamma}}

\newcommand{\N}{\ensuremath {\mathbb{N}}}
\newcommand{\R} {\ensuremath {\mathbb{R}}}

\newcommand{\matH} {\ensuremath {\mathbb{H}}}

\newcommand{\calP} {\ensuremath {\mathcal{P}}}

\newcommand{\calF} {\ensuremath {\mathcal{F}}}
\newcommand{\calT} {\ensuremath {\mathcal{T}}}

\newcommand{\tilM}{\ensuremath{{\widetilde{M}}}}

\address{Mathematical Institute, 24-29 St Giles, Oxford OX1 3LB, United Kingdom}
\email{sisto@maths.ox.ac.uk}

\begin{document}

\title{$3-$manifold groups have unique asymptotic cones}
\author{Alessandro Sisto}
\maketitle

\begin{abstract}
We describe the (minimal) tree-graded structure of asymptotic cones of non-geometric graph manifold groups, and as a consequence we show that all said asymptotic cones are bilipschitz equivalent.
\par
Combining this with geometrization and other known results we obtain that all asymptotic cones of a given $3-$manifold group are bilipschitz equivalent.
\end{abstract}

\section{Introduction}
Asymptotic cones of groups are useful asymptotic invariants defined in \cite{vDW} generalizing a construction by Gromov \cite{G1}. They depend on the choice of an ultrafilter and a diverging sequence of positive real numbers (the \emph{scaling factors}). Examples of all sorts of exotic behaviors with respect to changing the ultrafilter and/or scaling factor have been constructed within the class of finitely generated \cite{TV,OOS,DS,SS} and even that of finitely presented groups \cite{KSTT,OlS,OOH}. For example, it is proven in \cite{DS} that there exists a group with uncountably many non-homeomorphic asymptotic cones, while \cite{KSTT} contains examples of finitely presented groups all whose asymptotic cones are homeomorphic if the Continuum Hypothesis holds but that have $2^{2^{\aleph_0}}$ pairwise non-homeomorphic asymptotic cones if it fails. However, it is often the case that within a certain class of ``well-behaved'' groups the choice of ultrafilter/scaling factor does not matter. One of the simplest examples of this is provided by abelian groups. A more sophisticated example, which will be used later, is provided by hyperbolic groups \cite{DP}. Knowing something about the topology of the asymptotic cones of a group or class of groups can be used for several purposes, for example to obtain quasi-isometric rigidity results (e.g. for cocompact lattices in higher rank semisimple groups~\cite{KlL}, for fundamental groups of Haken manifolds \cite{KaL1,KaL2} and higher dimensional analogues \cite{FLS} and for mapping class groups~\cite{BKMM}). This is why it is interesting to know that the topology of the asymptotic cones of a certain group does not depend on the choice of ultrafilter/scaling factor.
\par
The aim of this paper is to show that this is the case for $3-$manifold groups. The following subsection contains the proof of our main results (Theorems \ref{graphcones} and \ref{3mancones}), up to some work that will be carried out later. The reader not familiar with the concepts used is referred to Section 2 for some background material.

\subsection{Main results}

The following results will be crucial:
\begin{enumerate}
\item If $G$ is hyperbolic relative to (proper) subgroups $H_1,\dots, H_n$ whose asymptotic cones are all bilipschitz equivalent, then all asymptotic cones of $G$ are bilipschitz equivalent (\cite[Theorem 6.3]{Si} or \cite{OS}).
\item If all asymptotic cones of $G$ contain cut-points, and the pieces in the minimal tree-graded structures are bilipschitz equivalent in each asymptotic cone, then all the asymptotic cones of $G$ are bilipschitz equivalent (\cite[Theorem 0.6]{Si}).
\end{enumerate}

Here is the first main result.

\begin{thm}\label{graphcones}
 All asymptotic cones of non-geometric graph manifolds are bilipschitz equivalent.
\end{thm}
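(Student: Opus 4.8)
The plan is to describe the asymptotic cones of $G=\pi_1(M)$ explicitly and to show that the resulting space is determined, up to bilipschitz equivalence, independently of the ultrafilter and the scaling factors. I would begin from the standard geometric model of the universal cover $\tilM$: the geometric decomposition of the non-geometric graph manifold $M$ cuts $\tilM$ into \emph{blocks}, one for each Seifert piece, each of which is quasi-isometric to $H\times\R$, where $H$ is a copy of $\matH^2$ truncated along a family of horoballs (the universal cover of the hyperbolic base orbifold-with-boundary) and the $\R$-factor is the Seifert fibre. Adjacent blocks are glued along \emph{walls}, each isometric to $\R^2$, and the gluing performs the characteristic ``flip'' interchanging fibre and base directions. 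The combinatorial pattern of adjacencies is recorded by the Bass--Serre tree $\calT$ of the decomposition, on which $G$ acts with $\Z^2$ edge-stabilisers.

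Fixing $\omega$ and the scaling factors, I would then analyse the cone $\calC$ of $\tilM$. There is a natural Lipschitz projection $\calC\to\calT_\omega$, where $\calT_\omega$ is the asymptotic cone of the Bass--Serre tree, itself a universal homogeneous $\R$-tree. Over $\calT_\omega$ the blocks become pieces bilipschitz to $A\times\R$, where $A$ is the asymptotic cone of $\matH^2$; the walls become copies of $\R^2$; and the flip identifies the fibre-line $\R$ of one block with a branch-line of $A$ in the neighbouring block. The crucial point is that $A$ is, up to isometry, the \emph{unique} complete homogeneous $\R$-tree with $2^{\aleph_0}$ branches at every point, and in particular it does not depend on $\omega$ or on the scaling factors; the same holds for $\calT_\omega$, and one checks that truncating $\matH^2$ by horoballs does not change the bilipschitz type of its cone. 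I should stress that, since these asymptotic cones contain no cut-points, the minimal tree-graded structure is trivial and result (2) above does not directly apply: the argument must be carried out by hand.

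The heart of the matter is then a rigidity statement: given two cones $\calC,\calC'$ arising from different choices, one must construct a bilipschitz homeomorphism between them. I would build it by induction along the tree of blocks, starting from a basepoint block and extending across one wall at a time. On each block one uses a fixed bilipschitz map $A\times\R\to A\times\R$ supplied by the homogeneity and universality of $A$; the homogeneity is exactly what lets one prescribe the image of a given fibre-line and of a given branch-line, so that the local maps on two blocks sharing a wall can be chosen to agree on that $\R^2$ despite the flip. Because $\calT_\omega$ and $\calT_{\omega'}$ are isomorphic universal $\R$-trees, these local data can be matched coherently over the whole tree.

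The step I expect to be the main obstacle is precisely this assembly: ensuring that the block-by-block maps are \emph{compatible on the shared walls} under the fibre/base flip, and that the multiplicative distortion does not accumulate as one proceeds across infinitely many walls. Making this precise requires encoding each Seifert fibre as a canonical geodesic line inside the $\R$-tree factor of each neighbouring block and controlling how these canonical lines sit inside $A$; the homogeneity of $A$ should keep the distortion of each extension under uniform control, which is what ultimately yields a globally bilipschitz map and hence the theorem.
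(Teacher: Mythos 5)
Your proposal contains a fatal structural error at the very point where you set aside the paper's machinery. You assert that the asymptotic cones of a non-geometric graph manifold group ``contain no cut-points,'' so that the minimal tree-graded structure is trivial and result (2) does not apply. The opposite is true: every asymptotic cone of such a group has cut-points. Non-geometric graph manifold groups have superlinear (in fact quadratic) divergence, and the separation phenomenon is exactly what the inner proposition in the proof of Proposition \ref{pieces1} establishes: if the Bass--Serre distance between (representatives of) two points diverges --- even sublinearly in the scaling factors --- then every path between them passes through a single point. This error propagates into your global model of the cone. You picture the whole cone as blocks bilipschitz to $A\times\R$ glued along $\R^2$-walls in a pattern governed by $\calT_\omega$, the asymptotic cone of the Bass--Serre tree. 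But wall-gluings occur only between ultralimits of \emph{adjacent} vertex spaces, and these are indexed by simplicial subtrees of the \emph{ultrapower} $T^{\N}/\omega$ of the Bass--Serre tree (sequences of vertices at finite distance), not by $\calT_\omega$, in which all such finite distances collapse to zero. Blocks whose Bass--Serre distance diverges meet in at most one point --- a cut-point --- rather than along a wall. The cone thus has a two-level structure: it is tree-graded, and each piece (``cluster'') is itself a tree of blocks glued along flipped flats; your picture conflates the two levels into one.

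As a consequence, the step you yourself flag as the main obstacle --- assembling block-by-block maps compatibly across walls without accumulating distortion --- is being attempted on the wrong object, and you have discarded the tool that handles the genuinely global part of the problem. In the paper, the wall-by-wall extension argument is carried out only \emph{within a piece} (Proposition \ref{pieces2}): there the maps are isometries, extended one edge at a time via Zorn's lemma together with the homogeneity and extension property of marked trees (Lemma \ref{theta}), so no distortion accumulates. The passage from ``all pieces are bilipschitz equivalent'' to ``the cones themselves are bilipschitz equivalent,'' i.e.\ matching up the tree-graded structures across the cut-points, is precisely result (2) (\cite[Theorem 0.6]{Si}), a nontrivial uniqueness theorem for tree-graded spaces for which your sketch offers no substitute. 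To repair the argument you would need to (i) reinstate the cut-point/tree-graded structure, (ii) prove the cluster description of the pieces --- this is where the special-path machinery, or an argument along the lines of \cite{BC}, is genuinely needed --- and (iii) either invoke (2) or reprove by hand a uniqueness statement for tree-graded spaces with prescribed pieces.
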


\begin{proof}
 By $(2)$, we only need to show that the same property holds for pieces in the minimal tree-graded structures. This is done in Propositions \ref{pieces1} and \ref{pieces2}.
\end{proof}

\begin{thm}\label{3mancones}
 Let $M$ be a compact connected orientable $3-$manifold whose (possibly empty) boundary is a union of tori. Then the asymptotic cones of $\pi_1(M)$ are all bilipschitz equivalent.
\end{thm}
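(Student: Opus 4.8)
The plan is to use geometrization to break $M$ into cases indexed by its geometry (when $M$ is geometric) or by the nature of its JSJ decomposition (when it is not), and to treat each case either by a direct computation of the asymptotic cone, by Theorem~\ref{graphcones}, or by reduction~(1). Throughout we may assume $\pi_1(M)$ is infinite, since a finite group has a one-point asymptotic cone; in particular the spherical case is immediate.

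Assume first that $M$ is geometric. If it carries $\matE^3$, Nil or $\matS^2\times\R$ geometry, then $\pi_1(M)$ is virtually nilpotent, and by Pansu's theorem all of its asymptotic cones are isometric to a fixed Carnot group; in particular they are bilipschitz equivalent. If $M$ is Seifert fibered over a hyperbolic $2$-orbifold, i.e.\ carries $\matH^2\times\R$ or $\widetilde{\mathrm{SL}}_2(\R)$ geometry, then $\pi_1(M)$ is quasi-isometric to $\matH^2\times\R$, whose asymptotic cone splits as $T\times\R$ with $T$ the asymptotic cone of $\matH^2$; since $T$ is the complete homogeneous $\R$-tree of valence $2^{\aleph_0}$, independent of the ultrafilter and scaling sequence up to isometry, so is $T\times\R$. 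If $M$ carries Sol geometry, the conclusion follows from the known description of the asymptotic cone of the (polycyclic) group $\pi_1(M)$. Finally, if $M$ is hyperbolic then $\pi_1(M)$ is either a hyperbolic group, whose cones are bilipschitz equivalent by \cite{DP}, or (in the cusped case) hyperbolic relative to its cusp subgroups $\cong\Z^2$, each with asymptotic cone $\R^2$, so that reduction~(1) gives the claim.

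Now assume $M$ is not geometric, so that its JSJ decomposition is nontrivial. If all pieces are Seifert fibered then $M$ is a graph manifold and we are done by Theorem~\ref{graphcones}. Otherwise at least one piece is hyperbolic, and then $\pi_1(M)$ is hyperbolic relative to a finite collection of subgroups, each of which is isomorphic to $\Z^2$ (the classes of the JSJ tori bounded on both sides by hyperbolic pieces and of the boundary tori lying in hyperbolic pieces) or is the fundamental group of a maximal graph-manifold component (a maximal connected union of Seifert pieces). Every group in this peripheral collection has bilipschitz-equivalent asymptotic cones: the $\Z^2$ factors have cone $\R^2$; a maximal component that is a single Seifert piece has hyperbolic base orbifold and so falls under the $\matH^2\times\R$ case above; and a maximal component containing at least two pieces is a non-geometric graph manifold, covered by Theorem~\ref{graphcones}. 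Reduction~(1) then yields the theorem.

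The main obstacle is the mixed case: one must establish that $\pi_1(M)$ is genuinely relatively hyperbolic with a peripheral structure as described, and check that this structure consists precisely of groups already known to have bilipschitz-equivalent cones. A secondary point requiring care is confirming that the asymptotic cones of Sol and of the Seifert-over-hyperbolic pieces are indeed independent of the ultrafilter and scaling sequence up to bilipschitz equivalence.
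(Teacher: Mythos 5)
Your overall strategy is the same as the paper's (geometrization, a case-by-case treatment of the eight geometries, Theorem~\ref{graphcones} for graph manifolds, and reduction~(1) via relative hyperbolicity for the mixed case), but there is one genuine gap: you never treat the case where $M$ is a nontrivial connected sum. The theorem allows an arbitrary compact connected orientable $M$ with toral boundary, so $M$ may be reducible, e.g.\ $T^3 \# T^3$ or $(S^1\times S^2)\#(S^1\times S^2)$. Such a manifold is in general neither geometric nor does it admit a decomposition along tori into Seifert and hyperbolic pieces; the canonical torus (JSJ) decomposition, and geometrization in the form you invoke it, apply to prime manifolds, so your dichotomy ``geometric, or nontrivial JSJ along tori'' silently assumes primeness. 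The paper's proof opens with exactly the missing step: if $M=M_1\#\cdots\# M_n$ is the prime decomposition, then $\pi_1(M)=\pi_1(M_1)*\cdots*\pi_1(M_n)$ is hyperbolic relative to the free factors, so reduction~(1) reduces the whole theorem to the prime case. Inserting that reduction at the start would close the gap, after which your argument matches the paper's.

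Two smaller points. First, your claim that a Seifert fibered $M$ over a hyperbolic $2$-orbifold has $\pi_1(M)$ quasi-isometric to $\matH^2\times\R$ is false when $M$ has non-empty boundary: there $\pi_1(M)$ is virtually $F_k\times\Z$, which is quasi-isometric to $(\text{tree})\times\R$ and not to $\matH^2\times\R$. This does not damage your conclusion, because by \cite{DP} the asymptotic cones of a non-elementary hyperbolic group (whether $F_k$ or $\pi_1$ of a closed hyperbolic surface) are all the universal $\R$-tree $Z$, so in either case the cones are $Z\times\R$; this is effectively how the paper handles the bounded $\matH^2\times\R$ case, and it is also what is needed for the single-Seifert-piece peripherals in your mixed case. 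Second, the two issues you flag but leave open are indeed resolvable by citation: uniqueness of the cones of Sol is \cite{dC}, and relative hyperbolicity of $\pi_1(M)$ with respect to the abelian and (maximal) graph-manifold peripheral subgroups in the mixed case is Dahmani's combination theorem \cite{Da}; these are precisely the references the paper uses.
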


\begin{proof}
We will use the geometrization theorem \cite{Pe1,Pe2,KLo,MT,CZ}.
First of all, we can reduce to the case when $M$ is prime (i.e., it cannot be written as $N_1\# N_2$, where $N_i\neq S^3$). In fact, recall that $G_1*...*G_n$ is hyperbolic relative to $\{G_1,\dots,G_n\}$. So, if $M=M_1\#...\# M_n$ is the prime decomposition of $M$, and $\pi_1(M_i)$ has bilipschitz equivalent asymptotic cones for each $i$, then by $(1)$ so does $\pi_1(M)$.
\par
So, let us assume that $M$ is prime. Suppose first that $M$ is geometric. We list below the possible geometries, and a reference for the uniqueness of the asymptotic cones of the corresponding manifolds in case it is needed.
\begin{itemize}
 \item $S^3$.
 \item $\R^3$.
 \item $\matH^3$, empty boundary: \cite{DP}.
 \item $\matH^3$, non-empty boundary: $(1)$.
 \item $S^2\times \R$.
 \item $\matH^2\times \R$, both empty and non-empty boundary: \cite{DP} (and asymptotic cones preserving products).
 \item $\widetilde {SL_2\R}$: it is quasi-isometric to $\matH^2\times\R$.
 \item $Nil$: \cite{Pa}.
 \item $Sol$: \cite{dC}.
\end{itemize}

If $M$ is not geometric, then we have 2 cases:

\begin{itemize}
 \item $M$ is a graph manifold. This case has been dealt with in Theorem \ref{graphcones}.
 \item $M$ contains a hyperbolic component $N$. In this case $\pi_1(M)$ is hyperbolic relative to abelian and graph manifold groups (by the combination theorem in \cite{Da}). So we can apply what we already know combined with $(1)$.
\end{itemize}
\end{proof}

\subsection*{Acknowledgment} The author would like to thank Jason Behrstock and Cornelia Dru\c{t}u for helpful discussions.

\section{Background}

For more details on asymptotic cones see \cite{D}.

\begin{defn}
 A \emph{(non-principal) ultrafilter} on $\N$ is a finitely additive probability measure on $\calP(\N)$ with values in $\{0,1\}$ such that finite sets have measure $0$.
\end{defn}
We will only deal with non-principal ultrafilters, hence we shall omit the adjective non-principal.

\begin{defn}
 Let $(r_n)_{n\in\N}$ be a sequence of nonnegative real numbers and $\omega$ an ultrafilter on $\N$. The ultralimit $\omega-\lim r_n$ of the sequence $(r_n)$ is $r\in[0,+\infty]$ if for each neighborhood $U$ of $r$ we have $\omega(\{n:r_n\in U\})=1$.
\end{defn}
Ultralimits of sequences of real numbers always exist and are unique.
\begin{defn}
 Let $X$ be a metric space, $\omega$ an ultrafilter on $\N$, $r=(r_n)$ a sequence of positive real numbers such that $\omega-\lim r_n=+\infty$ and $x=(x_n)_{n\in\N}$ a sequence of points of $X$. Set
$$X^\omega_{r,x}=\left\{(y_n)\in X^\N:\omega-\lim \frac{d(x_n,y_n)}{r_n}<\infty\right\}.$$
The \emph{asymptotic cone}  $Cone_\omega\big(X,x,r\big)$ of $X$ with respect to ultrafilter $\omega$, the \emph{scaling factor} $(r_n)$ and the \emph{basepoint} $(x_n)$ is $X^\omega_{r,x}/_\sim$, where the equivalence relation $\sim$ is defined as
$$(y_n)\sim (z_n)\iff \omega-\lim d(y_n,z_n)/r_n= 0.$$
The metric on said asymptotic cone is defined as $d([(y_n)],[(z_n)])=\omega-\lim d(y_n,z_n)/r_n.$
\end{defn}

When referring to asymptotic cones of a (finitely generated) group we will always mean asymptotic cones of a Cayley graph of the group. Asymptotic cones of groups do not depend up to isometry on the choice of the basepoint and up to bilipschitz equivalence on the choice of a finite system of generators.
\par
We will need some definitions and results from \cite{DS}.

\begin{defn}
Let $X$ be a complete geodesic metric space and let $\mathcal{P}$ be a collection of closed geodesic subsets, called \emph{pieces}, which cover the space $X$. We say that $X$ is \emph{tree-graded} with respect to $\mathcal{P}$ if
\begin{itemize}
\item[(T1)] The intersection of any two different pieces is either empty or a single point.
\item[(T2)] Every simple geodesic triangle in $X$ is contained in one piece.
\end{itemize}
\end{defn}

\begin{defn}
\label{cutp}
Let $X$ be a geodesic metric space. A point $x \in X$ is called a \emph{cut-point} of $X$ if the space $X \backslash \{x\}$ is not path connected.
\end{defn}

\begin{lemma}[\cite{DS}, Lemma 2.15]
\label{cutpoints}
If $X$ is tree-graded and $A\subseteq X$ does not contain cut-points then $A$ is contained in a piece.
\end{lemma}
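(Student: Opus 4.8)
The plan is to prove two complementary statements: (i) any two points of $A$ lie in a common piece, and (ii) if every pair of points of $A$ lies in a common piece, then all of $A$ lies in a single piece. Here I read the hypothesis with $A$ connected (equivalently path-connected, as $X$ is geodesic), and "does not contain cut-points" as meaning that no point of $A$ is a cut-point of the ambient $X$ in the sense of Definition \ref{cutp}; connectedness is what makes the statement nonvacuous.

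For (ii), suppose $a,b\in A$ lie in a piece $P$ with $a\neq b$, and let $c\in A$ be arbitrary; I must show $c\in P$. Choose pieces $P'\ni a,c$ and $P''\ni b,c$. If $P'=P$ or $P''=P$ we are done, so assume both differ from $P$; then (T1) gives $P\cap P'=\{a\}$ and $P\cap P''=\{b\}$. If $P'=P''$ then $a,b\in P'$, contradicting (T1) since $P\neq P'$ and $a\neq b$; hence $P,P',P''$ are pairwise distinct and $P'\cap P''=\{c\}$. Assuming also $c\neq a,b$ (otherwise $c\in P$ trivially), pick geodesics $[a,b]\subseteq P$, $[b,c]\subseteq P''$, $[c,a]\subseteq P'$ inside the respective pieces; by (T1) the three sides meet only at the shared vertices, so this is a \emph{simple} geodesic triangle, which by (T2) lies in a single piece $Q$. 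Then $a,b\in Q\cap P$ with $a\neq b$ forces $Q=P$, whence $c\in P$; but $c\in P\cap P'=\{a\}$ contradicts $c\neq a$. So in every case $c\in P$, and ranging over $c$ gives $A\subseteq P$.

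For (i), suppose toward a contradiction that $x,y\in A$ share no piece, and fix a geodesic $\sigma=[x,y]$; then $\sigma$ is not contained in any piece. Call $p\in\sigma$ \emph{piece-interior} if some nondegenerate subsegment of $\sigma$ around $p$ lies in a piece; this set is relatively open, and a short overlap argument (two overlapping subsegments in pieces $P_1,P_2$ meet in more than a point, so $P_1=P_2$ by (T1)) shows it is not all of the interior of $\sigma$, so I may pick a transition point $c\neq x,y$ that is not piece-interior. I claim $c$ is a cut-point of $X$. If not, some path joins $x$ to $y$ in $X\setminus\{c\}$; concatenating it with $\sigma$ yields a loop through $c$ from which one extracts a simple loop still containing a subarc of $\sigma$ through $c$. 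Invoking the standard consequence of (T2) that every simple loop is contained in one piece, that piece would contain a subsegment of $\sigma$ around $c$, contradicting the choice of $c$. Hence $c$ separates $x$ from $y$ in $X$; since $A$ is connected and $x,y$ lie in different components of $X\setminus\{c\}$, we must have $c\in A$, so $A$ contains a cut-point of $X$ — contradicting the hypothesis. This proves (i), and together with (ii) the lemma follows.

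The step I expect to be the main obstacle is the cut-point extraction in (i): both justifying the existence of a genuine transition point $c$ and, above all, establishing that simple loops lie in a single piece. The latter is not an immediate application of (T2) but needs a triangulation/limiting argument reducing an arbitrary simple loop to simple geodesic triangles. By contrast, the combinatorial finish (ii) and the connectivity argument forcing $c\in A$ are routine once the separating cut-point is in hand.
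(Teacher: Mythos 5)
The paper gives no proof of this lemma at all --- it is quoted verbatim from Dru\c{t}u--Sapir \cite{DS} --- so the comparison has to be with the statement the paper actually needs and with the standard argument in \cite{DS}. Your two-part strategy (any two points of $A$ share a piece; pairwise-sharing implies a single piece) is sound, and part (ii) is correct and clean. The real problem is your choice of interpretation. You read ``$A$ does not contain cut-points'' as ``no point of $A$ is a cut-point of the ambient space $X$,'' with $A$ connected. The intended meaning, which is what \cite{DS} proves and what this paper uses, is intrinsic: $A$ is path-connected and no point of $A$ disconnects $A$ itself. The two hypotheses are incomparable (a segment in the single-piece space $\mathbb{R}^2$ consists entirely of intrinsic cut-points of itself but contains no cut-point of $X$; a circle through the wedge point of two planes glued at a point contains a cut-point of $X$ but has none of its own), so you have proved a genuinely different statement. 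Worse, under your reading the lemma is vacuous exactly where the paper applies it: in the proof of Proposition \ref{pieces1} it is applied to the sets $\omega-\lim X_{v_n}$ inside an asymptotic cone of a non-geometric graph manifold group. That cone is homogeneous (the ultrapower of the group acts transitively by isometries) and has cut-points --- the paper's whole method via result (2) depends on this --- hence \emph{every} point of the cone is a cut-point of the cone, and no nonempty subset can satisfy your hypothesis.

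The good news is that your argument proves the intrinsic statement after a one-line repair, because the point $c$ you construct in (i) has the stronger property that every path in $X$ from $x$ to $y$ passes through $c$. If $A$ is path-connected, any path in $A$ joining $x$ to $y$ is in particular a path in $X$, so $c\in A$, and $A\setminus\{c\}$ contains no path from $x$ to $y$; that is, $c$ is a cut-point of $A$ in the intrinsic sense, which is the contradiction required. Part (ii) never uses the cut-point hypothesis, so it is untouched. Two further caveats, essentially the ones you flag yourself: the assertion that every simple loop lies in one piece is a genuinely nontrivial strengthening of (T2) --- in \cite{DS} it is established before Lemma 2.15 by an induction over simple geodesic polygons together with a limiting argument --- so your proof is complete only modulo that earlier result of \cite{DS}; and in the loop-extraction step you should first replace the path avoiding $c$ by a topological arc, and replace ``components'' of $X\setminus\{c\}$ by ``path components'' (these coincide here because geodesic spaces are locally path-connected). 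With these repairs your route is essentially the standard Dru\c{t}u--Sapir one: locate a point through which all paths from $x$ to $y$ must pass, observe it would be a cut-point of $A$, then finish with the (T1)/(T2) triangle argument you give in (ii).
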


\begin{lemma}[\cite{DS}, Lemma 2.31]
Let $X$ be a complete geodesic metric space.
There exists a unique collection of subsets $\mathcal{P}$ of $X$, called the \emph{minimal tree-graded structure}, such that $X$ is tree-graded with respect to $\mathcal{P}$ and any $P\in\mathcal{P}$ is (either a singleton or) a set with no cut-points.
\end{lemma}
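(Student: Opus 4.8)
The plan is to treat uniqueness and existence separately; uniqueness is short and existence carries the real content. For uniqueness, suppose $\calP$ and $\calP'$ both consist of singletons and cut-point-free sets and both make $X$ tree-graded. If $P\in\calP$ is not a singleton then $P$ has no cut-points, so Lemma \ref{cutpoints} applied to the structure $\calP'$ places $P$ inside some $P'\in\calP'$; the same lemma applied to $\calP$ places $P'$ inside some $Q\in\calP$. Then $P\subseteq Q$ with $P$ non-singleton, so (T1) forces $Q=P$ and hence $P=P'\in\calP'$. Thus the two structures have the same non-singleton pieces, and since the pieces must cover $X$ without redundant inclusions (this is exactly the minimality built into the statement) the singletons agree as well, giving $\calP=\calP'$.

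For existence I would take $\calP$ to be the collection of all maximal cut-point-free (path-connected) subsets of $X$. Two observations make this legitimate: the union of a nested chain of cut-point-free sets is again cut-point-free (if deleting a point disconnected the union, the two sides would already be separated inside one member of the chain), so Zorn's Lemma produces maximal ones; and every point $x$ lies in at least the cut-point-free set $\{x\}$, hence in a maximal one, so $\calP$ covers $X$. By construction every piece is a singleton or has no cut-points, which is the required property.

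The verification of the two tree-graded axioms rests on one elementary fact, call it $(\star)$: if $A,B$ are cut-point-free and $|A\cap B|\ge 2$, then $A\cup B$ is cut-point-free. Indeed, for any $z$ each of $A\setminus\{z\}$ and $B\setminus\{z\}$ is path-connected and, since $A\cap B$ retains a point after deleting $z$, the two reattach, so $(A\cup B)\setminus\{z\}$ stays path-connected. Axiom (T1) is then immediate: if two maximal pieces shared two points their union would be cut-point-free, contradicting maximality, so distinct pieces meet in at most a point. A variant of the same computation, with $B$ replaced by a geodesic $[x,y]$ whose endpoints lie in $A$, shows $A\cup[x,y]$ is cut-point-free, so by maximality $[x,y]\subseteq A$; hence each piece is geodesically convex, i.e. a genuine geodesic subset. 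Axiom (T2) is even cleaner: a simple geodesic triangle is an embedded circle, which has no cut-points, so it is cut-point-free and therefore contained in a maximal piece.

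The step I expect to be the genuine obstacle is showing that a maximal cut-point-free subset is \emph{closed}, as the definition of tree-graded requires; this is the only place the completeness of $X$ should enter. The natural approach is to prove that the closure of a cut-point-free set is still cut-point-free, so that maximality forces the set to equal its closure. Making this precise requires controlling limits of the path-connecting arcs inside a piece $P$ — using completeness and the convergence of geodesics in $X$ to produce, for a limit point $z$ of $P$, a path into $P$ witnessing that $P\cup\{z\}$ remains cut-point-free — and this convergence argument is where the hypotheses on $X$ are really used and where the care lies.
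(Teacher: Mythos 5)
The paper itself never proves this lemma --- it is quoted as background from [DS, Lemma~2.31] --- so your proposal has to be judged on its own merits rather than against an in-paper argument. The core of your construction is sound: the Zorn's Lemma argument (unions of chains of path-connected cut-point-free sets are again such), the gluing fact $(\star)$, the variant showing each maximal piece is convex, and the deduction of (T1) and (T2) all check out, as does the identification of the non-singleton pieces in the uniqueness part via Lemma~\ref{cutpoints}. One caveat on uniqueness: your closing appeal to ``no redundant inclusions'' is a convention you are adding, not something in the statement. As literally stated, uniqueness is in fact false: for $X=\mathbb{R}^2$ both $\{X\}$ and $\{X,\{p\}\}$ are tree-graded structures (a singleton inside a larger piece is permitted by (T1)) all of whose pieces are singletons or cut-point-free. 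This imprecision is inherited from the informal quotation of [DS]; once one imposes the convention that no piece is contained in another, your covering remark does pin down the singletons and uniqueness follows.

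The genuine gap is exactly the one you flagged and did not close: closedness of the maximal pieces. Worse, the route you sketch --- producing a limit of path-connecting arcs or geodesics --- is the wrong tool here. The spaces this lemma is applied to (asymptotic cones) are complete but essentially never proper, so there is no Arzel\`a--Ascoli-type compactness with which to extract convergent subsequences of geodesics; and in general the closure of a path-connected set need not even be path-connected (topologist's sine curve), so ``the closure of a cut-point-free set is cut-point-free'' cannot be taken as the starting point. The fix is elementary and uses the convexity you already established, with no compactness at all. Let $P$ be a maximal piece and $z\in\overline{P}$; pick $x_n\in P$ with $d(x_n,z)\le 2^{-n}$. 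By convexity the geodesics $[x_n,x_{n+1}]$ lie in $P$ and have length at most $2^{-n}+2^{-n-1}$, so the infinite concatenation $[x_1,x_2]\cup[x_2,x_3]\cup\cdots$, capped off by the point $z$, is a continuous path from $x_1$ to $z$ inside $P\cup\{z\}$; hence $P\cup\{z\}$ is path-connected. Moreover, for any $w\in P$, every point of $[x_n,x_{n+1}]$ lies within $2^{-n+2}$ of $z$, so only finitely many of these geodesics can pass through $w$; discarding an initial segment of the concatenation yields a path from some $x_N\in P\setminus\{w\}$ to $z$ avoiding $w$, and since $P\setminus\{w\}$ is path-connected (no cut-points), $(P\cup\{z\})\setminus\{w\}$ is path-connected. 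Thus $P\cup\{z\}$ is path-connected and cut-point-free, maximality forces $z\in P$, and $P$ is closed. With this substituted for your final paragraph, the existence proof is complete.
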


\begin{defn}
Let $X$ be a metric space. Fix an ultrafilter $\omega$ on $\N$, a scaling factor $r=(r_n)$ and a basepoint $x=(x_n)$. Let $\mathcal{A}$ be a collection of subsets of $X$. Then for every sequence $(A_n)$ of sets in $\mathcal{A}$, let
\[ \omega-\lim A_n = \{[(y_n)] \in Cone_\omega(X,x,r) : y_n \in A_n \}\]
be the \emph{ultralimit} of the sequence $(A_n)$.
We say that $X$ is \emph{asymptotically tree-graded} with respect to $\mathcal{A}$ if each $Cone_\omega(X,x,r)$ is tree-graded with respect to the set of non-empty pieces of the form
\[ \{ \omega-\lim A_n : (A_n)_{n \in \N} \in \mathcal{A}^\N \},\]
and also $\omega-\lim A_n\neq\omega-\lim A'_n$ if they are both non-empty and $\omega(\{n:A_n\neq A'_n\})=1$.
\end{defn}

The following is one of the several possible definitions of relative hyperbolicity.

\begin{defn}
 The (finitely generated) group $G$ is \emph{hyperbolic relative} to its subgroups $H_1,\dots,H_n$ if it is asymptotically tree-graded with respect to $\{gH_i\}_{g\in G,i=1,\dots,n}$.
\end{defn}

Notable examples of relatively hyperbolic groups, used in the proof of Theorem \ref{3mancones}, include free products, which are hyperbolic relative to the factors, and fundamental groups of finite volume hyperbolic manifolds (e.g. surfaces with punctures of negative Euler characteristic) which are hyperbolic relative to the cusp subgroups \cite{Fa}.

\section{Description of the pieces}

We start with a digression on the geometry of graph manifolds.

\subsection{Special paths in graph manifolds}

A preliminary definition:
\begin{defn}
 A tree of spaces is a pair $(X,T)$ where $X$ is a metric space, $T$ is a simplicial tree and for each vertex $v$ of $T$ a certain subset $X_v\subseteq X$, called \emph{vertex space}, has been assigned. For any edge $e$ of $T$ with endpoints $v,v'$ we will denote $X_e=X_{v}\cap X_{v'}$.
\end{defn}

Kapovich and Leeb defined in \cite{KL} a special class of graph manifolds, called \emph{flip}, which have the nice properties reported below. It will not be restrictive for us to study flip graph manifolds in view of the fact that any graph manifold group is quasi-isometric to the fundamental group of a flip graph manifold.
\par
Let $M$ be a flip graph manifold. There exists a locally $CAT(0)$ metric on $M$ such that its universal cover $\tilM$ has the structure of a tree of spaces $(\tilM,T)$ such that
\begin{itemize}
 \item for each vertex $v$, we have that $X_v$ is convex in $\tilM$ and isometric to $Y_v\times \R$ for some universal cover $Y_v$ of a compact surface with boundary,
 \item for each edge $e$, we have that $X_e$ is convex and isometric to $\R^2$,
 \item we can choose identifications of each $X_v$ with $Y_v\times \R$ and each $X_e$ with $\R^2$ such that, denoting the endpoints of the edge $e$ by $e^-, e^+$, for each $p\in Y_{e^-}$ (resp. geodesic $\gamma\in Y_{e^-}$) such that $(p,0)\in X_e$ (resp. $\gamma\times\{0\}\subseteq X_e$) we have that $\{p\}\times \R$ (resp. $\gamma\times\{t\}$ for each real $t$) is identified with $\gamma\times\{t\}\subseteq X_{e^+}$  for some geodesic $\gamma$ in $Y_{e^+}$ and $t\in\R$ (resp. $\{p\}\times\R\subseteq X_{e^+}$ for some $p\in Y_{e^+}$).
\end{itemize}

\begin{defn}
 We will say that a path in $\tilM$ is a \emph{special path} if it is constructed as follows. Let $x_0,x_n\in\tilM$ with $x_0\in X_{v_0},y_n\in X_{v_n}$. Let $v_0,\dots,v_n$ be the vertices on the unique geodesic in $T$ from $v_0$ to $v_n$. If $n=0$ let the special path connecting $x_0$ to $x_n$ be just a geodesic. Otherwise, let $e_i$ be the edge connecting $v_i$ to $v_{i+1}$. For $i=1,\dots n-1$ let $p_{i},q_i\in Y_{v_i}$ be the starting and final points of the unique geodesic from $\pi_{Y_{v_i}}(X_{e_{i-1}})$ to $\pi_{Y_{v_i}}(X_{e_{i}})$ minimizing the distance between those sets. Let $p_0=\pi_{Y_{v_0}}(x_0)$ and $q_0$ be the point in $\pi_{Y_{v_0}}(X_{e_{0}})$ minimizing the distance from $p_0$. Define $q_n$ similarly to $p_0$ and $p_n$ similarly to $q_0$. For $i=0,\dots,n-2$ let $t_{i+1},u_{i}\in\R$ be such that $(p_{i+1},t_{i+1})\in X_{v_{i+1}}$ is identified with $(q_{i},u_{i})\in X_{v_i}$. Let $t_0$ be such that $x_0=(p_0,t_0)$ and $u_n$ be such that $x_n=(q_n,u_n)$. For $i=0,\dots,n$ let $\g_i$ be the geodesic connecting $y_i=(p_i,t_i)$ to $z_i=(q_{i},u_{i})$.
\par
Finally, let the special path connecting $x_0$ to $x_n$ be the concatenation of the $\gamma_i$'s.

\end{defn}

The nice feature of special paths is the following:
\begin{rem}\label{nice}
If $n\geq 4$ then $\gamma_2,\dots,\gamma_{n-2}$ only depend on $v_0,v_n$. What is more, if for $i=1,2$ $\alpha_i$ is a special path connecting some point in $X_{w_i}$ to some point in $X_{w'_i}$ and the vertex $v$
\begin{enumerate}
 \item lies on the geodesic connecting $w_i,w'_i$, and
 \item $d(v,w_i),d(v,w'_i)\geq 2$,
\end{enumerate}
  then $\alpha_1\cap X_v=\alpha_2\cap X_v$.
\end{rem}

\begin{lemma}\label{special}
 There exists $K=K(M)$ such that all special paths are $K-$bilipschitz.
\end{lemma}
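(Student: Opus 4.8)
The plan is to prove the two-sided estimate for the arclength parametrization: since a special path is a concatenation of geodesic segments, the distance between two of its points never exceeds the arclength between them, so only the lower bound $d(x_0,x_n)\ge \ell(\alpha)/K$ needs work, where $\ell(\alpha)$ is the total length. As any subpath of a special path is again built by the same recipe (with at most the first and last segments truncated), it suffices to prove $\ell(\alpha)\le K\, d(x_0,x_n)$ for the full special path $\alpha$ joining $x_0$ to $x_n$. Writing $a_i=d_{Y_{v_i}}(p_i,q_i)$ for the base displacement and $b_i=|t_i-u_i|$ for the fibre displacement of $\g_i$ inside $X_{v_i}\cong Y_{v_i}\times\R$, we have $\ell(\alpha)=\sum_i\sqrt{a_i^2+b_i^2}\le\sum_i(a_i+b_i)$, so the whole problem reduces to bounding $\sum_i a_i$ and $\sum_i b_i$ by a multiple of $d(x_0,x_n)$.

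The first sum is the easy one. Each edge space $X_{e_i}$ separates $\tilM$ into the part containing $v_0$ and the part containing $v_n$, so the geodesic $[x_0,x_n]$ meets them in order at points $c_0,\dots,c_{n-1}$; setting $c_{-1}=x_0$, $c_n=x_n$ and using convexity of the vertex spaces, the subsegment from $c_{i-1}$ to $c_i$ stays in $X_{v_i}$, whence $d(x_0,x_n)=\sum_i d_{X_{v_i}}(c_{i-1},c_i)$. Projecting to the base and using that $a_i$ is by construction the minimal distance between $\pi_{Y_{v_i}}(X_{e_{i-1}})$ and $\pi_{Y_{v_i}}(X_{e_i})$, we get $d_{X_{v_i}}(c_{i-1},c_i)\ge d_{Y_{v_i}}(\pi c_{i-1},\pi c_i)\ge a_i$, so $\sum_i a_i\le d(x_0,x_n)$. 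Moreover, by the collar lemma applied in each of the finitely many surface types, distinct boundary lines of $Y_{v_i}$ lie at distance at least a uniform $a_{\min}>0$, so $a_i\ge a_{\min}$ and hence the number $n$ of traversed vertex spaces satisfies $n\le d(x_0,x_n)/a_{\min}$. This is what will let me absorb additive errors below.

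The fibre sum is where the flip structure enters and where the real difficulty lies. The identification across $X_{e_i}$ interchanges the fibre direction of $X_{v_i}$ with a base (boundary-line) direction of $X_{v_{i+1}}$ and vice versa; tracing the construction of the $t_i,u_i$ through this interchange expresses each fibre height of an endpoint of $\g_i$ as the base position, along a boundary line of a neighbouring base surface, of a foot of a minimizing geodesic there. Comparing the special path's feet $p_{i\pm 1},q_{i\pm 1}$ with the geodesic's feet $\pi c_{i\pm 1}$ on the same boundary lines, I will invoke the basic hyperbolic-surface estimate that in $Y_v$ the length of a geodesic joining two disjoint boundary lines dominates, up to a uniform multiplicative and additive constant, the displacement of each of its feet from the foot of the common perpendicular. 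Applied in $Y_{v_{i-1}}$ and $Y_{v_{i+1}}$, together with the geodesic's own fibre displacement in $X_{v_i}$, this yields a bound of the shape $b_i\lesssim d_{X_{v_{i-1}}}(c_{i-2},c_{i-1})+d_{X_{v_i}}(c_{i-1},c_i)+d_{X_{v_{i+1}}}(c_i,c_{i+1})+C$. Summing, each segment length appears a bounded number of times and the accumulated constant is $nC\le (C/a_{\min})\,d(x_0,x_n)$, giving $\sum_i b_i\le K'\,d(x_0,x_n)$.

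The main obstacle is precisely this fibre estimate: one must set up the flip-induced identifications carefully enough to read every relevant fibre height as a base position, and one must prove the surface-geometry inequality uniformly over the finitely many isometry types of vertex spaces occurring in $\tilM$ (here compactness of $M$ and $\delta$-hyperbolicity of the $Y_v$ are essential, as is the lower bound $a_{\min}$, which keeps the additive errors linear in $d$). The boundary indices $i=0,n$, where $x_0,x_n$ need not lie in an edge space, are handled by the same argument with the first and last segments treated directly; Remark \ref{nice} guarantees that the interior segments are genuinely intrinsic to the pair $(v_0,v_n)$, so the constant $K=K(M)$ produced this way is uniform over all special paths.
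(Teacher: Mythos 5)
Your proof is correct, and its skeleton is the same as the paper's: reduce to the length bound $\ell(\g)\le K\,d(x_0,x_n)$ (subpaths of special paths being again special), compare the special path with a genuine geodesic $\delta$ vertex space by vertex space, split lengths into base and fibre components, and use the flip identification to trade fibre displacements in one vertex space for base displacements along boundary lines of the neighbouring ones. The difference lies in how the fibre sum is tamed. The paper marks points $y'_i,z'_i$ on the vertical lines $\{p_i\}\times\R$, $\{q_i\}\times\R$ that lie uniformly close to $\delta$ --- this is where hyperbolicity of the $Y_v$ enters, since geodesics between disjoint boundary lines pass uniformly close to the common perpendicular --- after which the flip makes the cross terms vanish identically ($d^i_v(y_i,z'_{i-1})=d^i_v(y'_{i+1},z_i)=0$), so the fibre bound becomes pure triangle inequality. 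You work instead with the points $c_i$ where $\delta$ actually crosses the edge spaces, which avoids any existence question but requires the quantitative divergence estimate (foot displacement dominated by geodesic length, uniformly over the finitely many surface types) in the neighbouring surfaces to control the cross terms, plus the collar bound $a_i\ge a_{\min}$ so that the number of traversed vertex spaces is linear in $d(x_0,x_n)$ and the accumulated additive constants can be absorbed. These are two formulations of the same hyperbolic-surface fact; note that the paper's argument also implicitly needs $l(\delta)\gtrsim n$ for its formula $(2)$ to hold up to multiplicative error, which is exactly your $a_{\min}$ bound, so your version usefully makes explicit some bookkeeping that the paper compresses into its multiplicative ``$\approx$'' and the ``easily seen'' step, while the paper's choice of marked points is slicker in making the cross terms vanish outright.
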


\begin{proof}
We will use the notation $a\approx b$ if there exists $K=K(M)$ such that $b/K\leq a\leq Kb$. In the notation of the previous definition, which we will use throughout, if $x,y\in X_{v_i}$ we will denote by $d^i_h$ the distance of their projections on $Y_{v_i}$ and by $d^i_v$ the distance of their projections on $\R$. Notice that $d(x,y)\approx d^i_h(x,y)+d^i_v(x,y)$.
Let $\delta$ be a geodesic connecting $x_0$ to $x_n$ (we can assume $n\geq 3$) and let $\g$ be the special path connecting them. We have
$$l(\g)\approx \sum_{i=0}^n (d^i_h(y_i,z_i)+d^i_v(y_i,z_i)),\ \ \ (1)$$
where we set $y_0=x_0$, $z_n=u_n$ for convenience.
Let $y'_i$ (resp. $z'_i$) be a point $K-$close to $\delta$ lying on $\{p_i\}\times\R$ (resp. $\{q_i\}\times \R$). Choose $y'_0=y_0$, $z'_n=z_n$. It is easily seen that
$$l(\delta)\approx \sum_{i=0}^n(d^i_h(y'_i,z'_i)+d^i_v(y'_i,z'_i))+\sum_{i=0}^{n-1} (d^i_h(z'_i,y'_{i+1})+d^i_v(z'_i,y'_{i+1})).\ \ \ (2)$$
Notice that if $x,y\in X_{e_i}$ then $d^i_h(x,y)=d^{i+1}_v(x,y)$ and similarly for $d^i_v$.
\par
Also, $d^i_h(y_i,z_i)=d^i_h(y'_i,z'_i)$, so we just have to analyze the other terms of the sums. Set $z_{-1}=z'_{-1}=y_0$ and $y_{n+1}=y'_{n+1}=z_n$. Notice that $d^i_v(y_i,z'_{i-1})=0$ as, for $i\geq 1$, $y_i=z_{i-1}$ and $z_{i-1},z'_{i-1}$ both belong to $\{q_{i-1}\}\times\R$. Similarly, $d^i_v(y'_{i+1},z_{i})=0$. Hence,
$$d^i_v(y_i,z_i)\leq d^i_v(y_i,z'_{i-1})+d^i_v(z'_{i-1},y'_{i+1})+d_v(y'_{i+1},z_i)=d^i_v(z'_{i-1},y'_{i+1})\leq$$
$$d^i_v(y'_i,z'_i)+d^i_v(z'_{i-1},y'_i)+d^i_v(z'_i,y'_{i+1}).\ \ \ (*)$$
Summing all inequalities $(*)$ and using $(1),(2)$ we get
$l(\g)\leq K l(\delta),$
and we are done (we should prove a similar inequality also for subpaths of $\g$, but those are special paths as well).

\end{proof}

\subsection{Clusters}

\begin{conv}
 We will denote by $Z$ the homogeneous real tree with valency $2^{\aleph_0}$ at each point (for the existence and uniqueness of such object see \cite{MNO,DP}).
\end{conv}

\begin{defn}
Let $T$ be a simplicial tree and for each vertex $v$ of $T$ let $Z_v$ denote a copy of $Z$, let $Q_v$ be $Z_v\times \R$ and let $\calF_v=\{\gamma_{v,e}\}_{e\in edge(T),v\in e}$ be a collection of bi-infinite geodesics in $Z_v$ indexed by the edges of $T$ containing $v$. The \emph{cluster (of copies of $Z\times \R$)} $X=X(T,\{\calF_v\})$ with data $(T,\{\calF_v\})$ is the metric space constructed as follows. Let $\sim$ be the equivalence relation on $\bigcup Q_v$ such that $x\sim y$ if either $x=y$ or, for some vertices $v,v'$, the edge $e$ connecting them and $u,v\in\R$, we have $x=(\gamma_{v,e}(t),u)$ and $y=(\gamma_{v',e}(u),t)$.
\par
Let $\hat{X}=\hat{X}(T,\{\calF_v\})$ be $\bigcup Q_v/_\sim$ endowed with the natural path metric.
\par
Finally, let $X$ be the metric completion of $\hat{X}$.
\end{defn}

\begin{defn}
 Let $F_k$ be a free group, and let $H_1,\dots,H_n$ be cyclic subgroups such that $F_k$ is hyperbolic relative to $H_1,\dots,H_n$. A \emph{marked tree} is (a tree isometric to) an asymptotic cone of $F_k$ endowed with the standard tree-graded structure.
\end{defn}

\begin{lemma}\label{theta}
 All marked trees are isometric through isometries that preserve the pieces. Furthermore, given marked trees $T_0$, $T_1$, pieces $P_0\subseteq T_0$, $P_1\subseteq T_1$ and an isometry $\theta:P_0\to P_1$, we can require the isometry $T_0\to T_1$ to extend $\theta$.
\end{lemma}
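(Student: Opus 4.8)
The plan is to treat a marked tree as a homogeneous, ``universal'' metric object and to build the required isometry by a transfinite back-and-forth argument. First I would fix the ambient picture: since $F_k$ is hyperbolic, each of its asymptotic cones is isometric to the complete homogeneous $\R$-tree $Z$ of valence $2^{\aleph_0}$, and $Z$ is unique up to isometry (by \cite{DP,MNO}). The standard tree-graded structure coming from relative hyperbolicity singles out a family $\mathcal{P}$ of pieces, each of which is the ultralimit of a peripheral coset $g_nH_{i_n}$; because the $H_i$ are undistorted and cyclic, every such piece is an isometrically embedded line, and by (T1) two distinct pieces meet in at most one point. Thus a marked tree is precisely a pair $(Z,\mathcal{P})$, and both assertions reduce to showing that the isomorphism type of $(Z,\mathcal{P})$ is unique and strongly homogeneous.

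The crucial input is the homogeneity of the configuration $\mathcal{P}$ together with the fact that it does not depend on $k$ nor on the particular cyclic peripheral structure. I would exploit two symmetries: the action of $F_k$ on its Cayley tree permutes the cosets $gH_i$ transitively for each fixed $i$ and translates each coset through $H_i$, inducing isometries of the cone that permute the pieces; and the self-similarity of asymptotic cones under rescaling, with the freedom in the ultrafilter, upgrades this to full local homogeneity. Concretely I would verify that the picture looks the same near every point: each point lies on $2^{\aleph_0}$ pieces, the geodesic germs at a point split into those lying on a piece and those not (e.g.\ the germ of an alternating word lies on no monochromatic coset), each class having cardinality $2^{\aleph_0}$, and along every piece, at every point, $2^{\aleph_0}$ further pieces branch off on each side and toward each end. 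The essential point is that all the relevant cardinalities equal $2^{\aleph_0}$, so the numerical data distinguishing different free groups and different peripheral structures is washed out in the cone, leaving a single richness type. I expect this to be the main obstacle: isolating the correct finite list of homogeneity properties of $(Z,\mathcal{P})$ and checking that cones of arbitrary $F_k$ satisfy them.

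Granting this, the first assertion follows by a standard back-and-forth. I would construct an increasing chain of partial, piece-preserving isometries $f_\alpha\colon A_\alpha\to B_\alpha$ between countable substructures of $T_0$ and $T_1$ (each $A_\alpha$ a countable subtree together with the traces on it of the pieces it meets), enumerating the points and pieces of $T_0$ and of $T_1$ and alternately forcing each into the domain and the range. At a successor stage one adds a single point or a single piece on one side and, using the homogeneity above, finds a matching point or piece on the other side realizing the same local pattern (same branching, same on-piece/off-piece split of germs); only the cardinality $2^{\aleph_0}$ of the available choices is needed at each step. The union of the $f_\alpha$ is a piece-preserving isometry between dense subsets, and since $Z$ is complete it extends to the desired isometry $T_0\to T_1$ carrying $\mathcal{P}$ onto $\mathcal{P}$.

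For the second assertion I would simply initialize the back-and-forth with $\theta\colon P_0\to P_1$ as $f_0$. This is legitimate because $\theta$ already matches two pieces isometrically, and the structure hanging off a piece is homogeneous under all of $\Isom(\R)$: translations are supplied by the peripheral action, while reflections and side-swaps come from the abstract homogeneity of $(Z,\mathcal{P})$ established above, the two ends and the two sides of a piece carrying identical $2^{\aleph_0}$-configurations. Running the back-and-forth from $f_0$ then yields a piece-preserving isometry $T_0\to T_1$ extending $\theta$. Equivalently, one may deduce this from the first assertion together with the transitivity of $\Isom(T_1,\mathcal{P})$ on pieces and the surjectivity of its restriction to a single piece onto $\Isom(\R)$.
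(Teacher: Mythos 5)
Your strategy---building the isometry by a transfinite back-and-forth through partial piece-preserving isometries---is in substance the same strategy as the proofs the paper appeals to: the paper disposes of this lemma by citing the uniqueness of universal tree-graded spaces (\cite[Theorem 6.34]{Si}, in the slightly strengthened form provided by its proof, or \cite{OS}), noting exactly that those proofs proceed by extending partial isometries and that the isometries they construct preserve pieces. Read as a self-contained replacement for that citation, however, your sketch has two genuine gaps. The first is set-theoretic: $Z$ has valency $2^{\aleph_0}$ at every point, so it contains $2$-separated subsets of cardinality $2^{\aleph_0}$ and its density character is $2^{\aleph_0}$; in particular it is nowhere separable. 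A strictly increasing chain of \emph{countable} substructures has length at most $\omega_1$, so its union has cardinality at most $\aleph_1$ and, unless one assumes the Continuum Hypothesis, cannot be dense in $T_0$; your final step (``isometry between dense subsets, then pass to completions'') therefore does not go through as written, and a ZFC proof cannot lean on CH. The back-and-forth must run for $2^{\aleph_0}$ steps over substructures of size $<2^{\aleph_0}$, and the homogeneity input required is the correspondingly stronger one: extension of partial isometries over configurations of size $<2^{\aleph_0}$, which is where the hypothesis that the valency is exactly $2^{\aleph_0}$, and not merely infinite, is actually used.

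The second gap is the substantive one, and you half-acknowledge it. The invariants you propose to verify are germ-level data at single points: valency, the on-piece/off-piece split of germs, branching along a piece. But the extension step of the back-and-forth is not over points. To adjoin a point $x$ to a domain subtree $A$ you must adjoin the whole arc $[p,x]$, where $p$ is the projection of $x$ to $A$, and you must reproduce in the target not just a germ at $p$ but the entire pattern in which $[p,x]$ meets the pieces---a possibly dense family of closed subarcs lying on pieces, together with the combinatorics recording which of these subarcs lie on a common piece with one another or with pieces already met by $A$. What is needed is a realization/saturation statement: every such pattern realized in one marked tree is realized in the other, branching off at any prescribed point in a fresh direction, in $2^{\aleph_0}$ many ways. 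That statement is precisely the content of the universal tree-graded space theory of \cite{Si,OS}, and the symmetries you invoke do not supply it: ultralimits of left translations of $F_k$ do permute the pieces transitively, but they can never reverse a piece (in a free group no nontrivial element is conjugate to its inverse, and a limit of left translations stabilizing a piece setwise comes from elements of the corresponding peripheral subgroup, hence acts on that piece by translations). So even your closing reduction---that the stabilizer of a piece in $\Isom(T_1,\mathcal{P})$ surjects onto $\Isom(\R)$---already presupposes the ``abstract homogeneity'' you deferred. You flag this verification as the main obstacle; it is, and leaving it unproved leaves the lemma unproved: filling it in amounts to re-proving the theorem of \cite{Si,OS} that the paper cites instead.
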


\begin{proof}
 The existence of the said isometry satisfying the required properties follows from \cite{Si} (or \cite{OS}), up to using a slightly improved version of \cite[Theorem 6.34]{Si} (uniqueness of universal tree-graded spaces), which in any case follows from the proof. We are referring to the proof rather than the statement because in our setting
\begin{itemize}
 \item the statement would not guarantee that there is an isometry preserving the pieces,
 \item the statement is not about extensions of ``partial'' isometric embeddings.
\end{itemize}
However, the isometries constructed in the proof do preserve pieces, and the proof strategy is indeed to extend partial isometries (the reader can readily check that the isometry we deal with is indeed one of those that are proved to be extendable).
\end{proof}

The following proposition probably admits a slightly simpler proof along the lines of \cite{BC}. However, special paths will be used in a forthcoming paper for other purposes.

\begin{prop}\label{pieces1}
 Each piece in the asymptotic cone of a non-geometric graph manifold (endowed with the minimal tree-graded structure) is bilipschitz homeomorphic to a cluster $X(T,\{\calF_v\})$ such that
\par\smallskip
 $(\ast)$ for each vertex $v$, $(Z_v,\{\gamma_{v,e}\})$ is a marked tree.
\end{prop}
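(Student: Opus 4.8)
The plan is to pass to the universal cover $\tilM$ of a flip graph manifold, which is harmless since the bilipschitz type of an asymptotic cone is a quasi-isometry invariant and every graph manifold group is quasi-isometric to a flip one; I equip it with the locally $CAT(0)$ metric and the tree-of-spaces structure $(\tilM,T)$ recalled above and compute the cone $\tilM^\omega=Cone_\omega(\tilM,x,r)$ directly. The coarse nearest-point projection $\pi\colon\tilM\to T$ to the Bass--Serre tree is Lipschitz and has convex fibres (preimages of subtrees are convex in the $CAT(0)$ metric), so it induces a Lipschitz map $\pi^\omega\colon\tilM^\omega\to T^\omega$ onto the asymptotic cone of $T$, a real tree, and the fibres $F_\tau=(\pi^\omega)^{-1}(\tau)$ are convex. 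I would first show that $\{F_\tau\}_{\tau\in T^\omega}$ is a tree-graded structure on $\tilM^\omega$: the fibres cover $\tilM^\omega$ and are closed geodesic subsets, distinct fibres are disjoint so (T1) holds vacuously, and (T2) holds because any geodesic between two fibres factors through the point where the geodesic of $T^\omega$ joining their images first meets the source fibre, so the two sides of a triangle reaching a vertex in another fibre share an initial segment and the triangle cannot be simple; hence every simple geodesic triangle projects to a single point of $T^\omega$ and lies in one fibre.

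Next I would identify each fibre as a cluster. Fix $\tau=[(v_n)]$; a point of $F_\tau$ is represented by $x_n\in X_{w_n}$ with $d_T(w_n,v_n)=o(r_n)$, and I would use Lemma \ref{special} to compute distances in $F_\tau$ by special paths: since these are uniformly $K$-bilipschitz, the cone distance between two points equals, up to the constant $K$, the sum of the horizontal ($Y_{v_i}$) and vertical ($\R$) contributions along the special path joining suitable representatives. Grouping vertex-sequences $(w_n)$ by $\omega(\{n:w_n=w'_n\})=1$ gives the vertex set of a simplicial tree $S$, with edges recording $\omega$-adjacency in $T$, and for each class the ultralimit of $X_{w_n}=Y_{w_n}\times\R$ is a copy of $Z_v\times\R$: here $Y_{w_n}$ is the universal cover of a compact surface with boundary, hence quasi-isometric to a free group, so its cone is the homogeneous $2^{\aleph_0}$-valent tree $Z$ by \cite{DP}, while the limits of the boundary geodesics $\pi_{Y_{w_n}}(X_e)$ furnish the marked structure $\{\gamma_{v,e}\}$. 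Since every boundary component of $Y_{w_n}$ is an edge space, $(Z_v,\{\gamma_{v,e}\})$ is the asymptotic cone of a free group with its tree-graded structure relative to the cyclic boundary subgroups, i.e.\ a marked tree, giving $(\ast)$. The flip identifications pass to the limit as exactly the relation $(\gamma_{v,e}(t),u)\sim(\gamma_{v',e}(u),t)$ defining the cluster, so I obtain a natural bijection $F_\tau\to X(S,\{\calF_v\})$ which, by the special-path estimate, is bilipschitz.

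The two delicate points, and the likely main obstacle, are the well-definedness of the cluster data $(S,\{\calF_v\})$ in the limit and the verification that the structure above is the minimal tree-graded one. For the former I would lean on Remark \ref{nice}: the middle segments of a special path depend only on the endpoints' vertices, so the gluing geodesics $\gamma_{v,e}$, and hence the combinatorial type of the gluing, are independent of the chosen representatives, which is precisely what makes $F_\tau\cong X(S,\{\calF_v\})$ well defined and consistent across overlapping vertex spaces. For the latter, each cluster $X(S,\{\calF_v\})$ has no cut-points, since removing a point from any $Z_v\times\R$ or from a gluing flat leaves a path-connected set (the $\R$-factors providing the connections), so every fibre is cut-point-free; by the uniqueness of the minimal tree-graded structure (\cite{DS}, Lemma 2.31, as cited) the structure $\{F_\tau\}$ must coincide with the minimal one, and therefore the pieces are exactly the fibres $F_\tau$, each bilipschitz homeomorphic to a cluster satisfying $(\ast)$.
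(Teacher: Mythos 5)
The decisive step in your proposal is the identification of the pieces with the fibres $F_\tau$ of the induced projection $\pi^\omega\colon \tilM_\omega\to T^\omega$, and this identification is false. A fibre of $\pi^\omega$ consists of all points admitting representatives whose Bass--Serre coordinates lie within $o(r_n)$ of $(v_n)$; but, as the intermediate proposition inside the paper's proof shows, two points lie in the same piece if and only if they admit arbitrarily small perturbations whose Bass--Serre distance is $\omega$-\emph{bounded}. These two relations genuinely differ. Concretely, pick a geodesic path $v_n=u_n^0,u_n^1,\dots,u_n^{k_n}=w_n$ in the Bass--Serre tree with $k_n\approx\sqrt{r_n}$, choosing at each $u_n^j$ the exiting edge so that the two relevant boundary components of $Y_{u_n^j}$ are at distance $\approx\sqrt{r_n}$ from each other (possible, since distinct boundary components of the universal cover of a compact hyperbolic surface can be arbitrarily far apart). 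Take $x_n\in X_{v_n}$ and $y_n\in X_{w_n}$ near the first and last edge spaces. Then $d_T(v_n,w_n)=o(r_n)$, so $x=[(x_n)]$ and $y=[(y_n)]$ lie in the same fibre, and $d(x_n,y_n)\approx k_n\sqrt{r_n}\approx r_n$, so they are at distance $\approx 1$ in the cone. However, crossing each intermediate vertex space costs $\approx\sqrt{r_n}$, so any perturbations $x'_n,y'_n$ with $d(x_n,x'_n),d(y_n,y'_n)\le\epsilon r_n$, $\epsilon<1/2$, still satisfy $d_{BS}(x'_n,y'_n)\gtrsim(1-2\epsilon)k_n\to\infty$: by the paper's characterization, $x$ and $y$ lie in \emph{different} pieces, and the cut-points separating them (ultralimits of the shared middle segments of special paths, via Remark \ref{nice}) lie inside $F_\tau$ itself.

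Consequently the fibres are strictly coarser than the pieces and do contain cut-points, so your final appeal to the uniqueness of the minimal tree-graded structure (\cite{DS}, Lemma 2.31) collapses: its hypotheses are not satisfied by $\{F_\tau\}$. Your cut-point-freeness argument implicitly assumes that every point of $F_\tau$ can be reached from a fixed vertex space by finitely many $\omega$-adjacencies, which fails in the example above; for the same reason your ``tree'' $S$, defined by $\omega$-adjacency over all vertex sequences in a fibre, is actually a disconnected forest, so $F_\tau$ is not a single cluster $X(S,\{\calF_v\})$. Note also that your (T2) step --- ``any geodesic between two fibres factors through a point'' --- is asserted rather than proved, and it is precisely the hard content of the whole proposition: this is what the paper establishes by combining Lemma \ref{special} (special paths are uniformly bilipschitz, so their ultralimits approximate arbitrary paths) with Remark \ref{nice} (special paths whose endpoints have distant Bass--Serre coordinates share their middle portions). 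The repair is to define the vertex set of the cluster by $\omega$-bounded, rather than sublinear, Bass--Serre distance, and then actually carry out the cut-point construction --- which is the paper's proof.
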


\begin{proof}
It suffices to show the statement for the universal cover $\tilM$ of the flip graph manifold $M$. For $x,y\in \tilM$ denote by $d_{BS}(x,y)$ the distance in the Bass-Serre tree of the closest vertices $v,w$ such that $x\in X_v, y\in X_w$.
\par
 We have to prove is that pieces as in the statement satisfy a description similar to that of pieces in asymptotic cones of the mapping class group \cite{BKMM} and right angled Artin groups \cite{BC}, namely:
 
\begin{prop}
 $x,y\in \tilM_\omega$ belong to the same piece if and only if there exist points $x'=[(x'_n)],y'=[(y'_n)]$ arbitrarily close to $x,y$ such that $\omega-\lim d_{BS}(x'_n,y'_n)<+\infty$.
\end{prop}
\begin{proof}
The ``if'' part is easy. In fact, any subset of $\tilM_\omega$ of the form $\omega-\lim X_{v_n}$ is a subset of a piece, as it does not contain cut-points, see Lemma \ref{cutpoints}. Also, if $d(v_n,v'_n)=1$ for each $n$ then $\omega-\lim X_{v_n}$ and $\omega-\lim X_{v'_n}$ are subsets of the same piece. Finally, pieces are closed.
\par
Let us prove the ``only if'' part. We have to find a point $p\in\tilM_\omega$ with $p\neq x,y$ such that all paths from $x=[(x_n)]$ to $y=[(y_n)]$ pass through $p$. Define a path in $\tilM$ to be $\omega-$special if it is an ultralimit of special paths. Notice that Lemma \ref{special} tells us that one can approximate any continuous path in $\tilM_\omega$ by a concatenation of $\omega-$special paths. For $p\in\tilM$ denote by $v(p)$ some vertex such that $p\in X_{v(p)}$. We claim that we can find vertices $v_n, w_n$ such that
\begin{enumerate}
 \item $v_n, w_n$ lie on the geodesic from $v(x_n)$ to $v(y_n)$,
 \item $\omega-\lim d(v_n,w_n)=\infty$,
 \item there exist points $p_n\in X_{v_n}$ and $q_n\in X_{w_n}$ such that $d(p,q)=0$ where $p=[(p_n)],q=[(q_n)])$,
 \item $d(p,x), d(p,y)>0$.
\end{enumerate}
Let $(r_n)$ be the scaling factor of $\tilM_\omega$ and let $\epsilon$ be small enough that $x',y'$ as in the statement of the proposition do not exist in $B_\epsilon(x), B_\epsilon(y)$. Let $\g_n$ be a special path from $x_n$ to $y_n$, and let $p'_n,q'_n\in\g_n$ be such that $d(x_n,p'_n), d(y_n,q'_n)=\epsilon r_n$ (they can be defined $\omega-$a.e.). Subdivide the geodesic between $v(p'_n)$ and $v(q'_n)$ in (approximately) $\sqrt{d_n}$ disjoint intervals of length (approximately) $\sqrt{d_n}$, where $d_n=d(v(p'_n),v(q'_n))$. It is quite clear that we can choose $v_n,w_n$ to be the endpoints of one such interval because of the finiteness of $d(x,y)$.
\par
We are almost done. Consider any path $\alpha$ in $\tilM_\omega$ connecting $x$ to $y$. Fix some $\epsilon$ and consider a concatenation of $\omega-$special paths $\delta_1,\dots,\delta_k$ connecting $x$ to $y$ and contained in the $\epsilon-$neighborhood of $\alpha$. Write $\delta_i=\omega-\lim \delta^i_n$ where $\delta^i_n$ is a special path connecting $x^i_n$ to $y^i_n$. It is quite clear that for some $i$ we have $\omega-\lim diam([v_n,w_n]\cap [v(x^i_n),v(y^i_n)])=\infty$. By Remark \ref{nice} and the properties listed above, we then get $p\in\delta_i$ and hence $d(p,\alpha)\leq \epsilon$. As this is true for any $\epsilon$ (and $\alpha$ is closed) we get $p\in\alpha$.
\end{proof}

We can now conclude the proof. The proposition tells us that any piece is obtained as the closure of an union of subspaces isometric to $Z\times \R$. Those subspaces are ultralimits of vertex space of $\tilM$, and therefore each of them corresponds to a vertex $v$ in the ultrapower $\calT$ of the Bass-Serre tree of $M$, that is to say $T^\N/_\sim$ where $(x_n)\sim (y_n)$ if and only if $\omega(\{n:x_n=y_n\})=1$.
Notice that $\calT$ is in a natural way a simplicial forest. Let $V$ be the collection of all vertices $v$ as above. Once again in view of the proposition, $V$ is the set of vertices of some connected subset $T$ of $\calT$, which is therefore a tree.
\par
Finally, property $(\ast)$ follows from the fact that any vertex space $X_v=Y_v\times\R$ of $\tilM$ has the property that $Y_v$ is asymptotically tree-graded with respect to the collection of its boundary components $B$ such that $B\times\R=X_v\cap X_{v'}$ for some vertex space $X_{v'}\neq X_v$.
\end{proof}


\begin{prop}\label{pieces2}
 If $X(T,\{\calF_v\})$ and $X(T',\{\calF'_v\})$ are clusters satisfying condition $(\ast)$, then they are isometric.
\end{prop}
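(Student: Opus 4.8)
The plan is to build the isometry $X(T,\{\calF_v\})\to X(T',\{\calF'_v\})$ by propagating a piece-preserving isometry of vertex spaces across the tree, using the homogeneity and extendability supplied by Lemma \ref{theta}. The starting observation is that condition $(\ast)$ identifies, for each vertex $v$, the pair $(Z_v,\{\gamma_{v,e}\})$ with a marked tree in which the distinguished lines $\gamma_{v,e}$ are exactly the pieces; hence the edges of $T$ at $v$ are in bijection with the pieces of that marked tree. By Lemma \ref{theta} any two marked trees are isometric through an isometry carrying pieces to pieces, so all vertex spaces have the same number of pieces, and therefore all vertices of both $T$ and $T'$ share a common valence. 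In particular we never need to compare $T$ and $T'$ directly: the tree isomorphism will be produced automatically, edge by edge, by the induced piece-correspondences.

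For the base case, fix vertices $v_0\in T$, $v_0'\in T'$ and use Lemma \ref{theta} to choose a piece-preserving isometry $\phi_{v_0}\colon Z_{v_0}\to Z'_{v_0'}$; then $\phi_{v_0}\times\mathrm{id}$ is an isometry $Q_{v_0}\to Q'_{v_0'}$ carrying each $\gamma_{v_0,e}\times\R$ onto some $\gamma'_{v_0',e'}\times\R$. The heart of the argument is the extension step across an edge. Suppose we have a piece-preserving isometry $\phi_v\times\sigma_v\colon Q_v\to Q'_{v'}$ (with $\sigma_v$ an isometry of the $\R$-factor) and an edge $e=(v,w)$ with $w$ not yet treated; let $e'=(v',w')$ be the edge determined by $\phi_v(\gamma_{v,e})=\gamma'_{v',e'}$. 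The gluing relation, together with the flip exchanging the two coordinates across an edge, forces the value of the extended map on the overlap $\gamma_{w,e}\times\R\subseteq Q_w$: reading off $(\gamma_{w,e}(u),t)\sim(\gamma_{v,e}(t),u)$ and applying $\phi_v\times\sigma_v$ shows that $\gamma_{w,e}$ must be sent to $\gamma'_{w',e'}$ by a prescribed isometry $\theta$ and the $\R$-factor by a prescribed isometry $\sigma_w$. Now apply the \emph{extension} clause of Lemma \ref{theta} to the partial isometry $\theta\colon\gamma_{w,e}\to\gamma'_{w',e'}$ between pieces of the marked trees $Z_w$ and $Z'_{w'}$: it extends to a piece-preserving isometry $\phi_w\colon Z_w\to Z'_{w'}$, and $\phi_w\times\sigma_w$ then agrees with the forced values on the overlap, giving a consistent extension of the isometry to $Q_w$.

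To assemble these local extensions globally, note first that, $T$ being a tree, the map on each $Q_w$ is determined along the unique path from the base vertex, so no cycle-consistency condition on the offsets $\sigma_w$ can arise. Since the common valence is uncountable, rather than a sequential exhaustion I would run Zorn's Lemma on the poset of partial isometries: triples $(S,f,\{\phi_v\}_{v\in S})$ where $S\subseteq T$ is a subtree, $f$ a simplicial embedding into $T'$ compatible with the piece-correspondences, and the $\phi_v\times\sigma_v$ glue to an isometry of the sub-cluster over $S$, ordered by extension. Chains have upper bounds by taking unions, so there is a maximal element; if its domain subtree $S$ omitted a vertex we could extend across an escaping edge by the step above, contradicting maximality, so $S=T$, and bijectivity of each piece-correspondence $\calF_v\to\calF'_{f(v)}$ forces $f$ to be onto $T'$ as well. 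This produces an isometry of the glued spaces $\hat X\to\hat X'$, which extends uniquely to the metric completions $X(T,\{\calF_v\})\to X(T',\{\calF'_v\})$. The main obstacle is purely the bookkeeping in the extension step — tracking the coordinate flip and the induced reparametrization $\sigma_w$ of the $\R$-factor and checking well-definedness across every gluing — while the genuinely nontrivial input, that a partial isometry defined on a single piece extends to the whole vertex space, is exactly what Lemma \ref{theta} provides.
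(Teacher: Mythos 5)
Your proof is correct and takes essentially the same route as the paper's: a Zorn's Lemma argument over piece-compatible partial isometries defined on subtrees, with the key extension step across an edge handled by the extension clause of Lemma \ref{theta}, and surjectivity onto $T'$ (hence onto $\hat{X}'$) deduced from the bijectivity of the induced piece-correspondences $\calF_v\to\calF'_{\psi(v)}$. The only differences are cosmetic: you track the coordinate flip and the $\R$-factor reparametrizations $\sigma_v$ more explicitly than the paper does, which is exactly the bookkeeping the paper leaves implicit in its conditions (3)--(5) on good triples.
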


\begin{proof}
 Notice that it is enough to show that $\hat{X}=\hat{X}(T,\{\calF_v\})$ is isometric to $\hat{X'}=\hat{X}(T',\{\calF'_v\})$. The objects associated to $X(T,\{\calF_v\})$ (resp. $X(T,\{\calF_v\})$) described in the definition of cluster will be denoted by $Z_v, Q_v,\gamma_{v,e}$ (resp. $Z'_v, Q'_v,\gamma'_{v,e}$). For each subtree $U$ of $T$, denote $\bigcup_{v\in vert(U)} Q_v/_\sim$ by $X_U$, and similarly for subtrees of $T'$.
\par
A triple $(U,\phi, \psi)$ is said to be \emph{good} if
\begin{enumerate}
 \item $U$ is a subtree of $T$,
 \item $\phi:X_U\to \hat{X'}$ is an isometric embedding,
 \item $\psi:U\to T'$ is a simplicial embedding, and $\phi(Q_v)=Q_{\psi(v)}$,
 \item for each vertex $v$ of $U$, $\phi|_{Q_v}$ preserves the product structure,
 \item for each vertex $v$ of $U$, $\phi|_{Q_v}$ induces a bijection between $\calF_v$ and $\calF'_{\psi(v)}$ (we will denote by $\phi$ also the induced bijections $\calF_v\to\calF'_{\psi(v)}$).
\end{enumerate}

Let us now show the simple fact that if $(T,\phi, \psi)$ is a good triple, then  $\phi$ is an isometry. Indeed, suppose by contradiction that $\phi$ is not surjective. It is then quite clear that $\psi$ is not surjective either. Let $\psi(v)$ be a vertex in $\psi(T)$ such that there exists an edge $e'$ containing $\psi(v)$ but not contained in $\psi(T)$. Condition $5)$ guarantees that there exists an edge $e$ of $T$ containing $v$ such that $\phi(\gamma_{v,e})=\gamma'_{\psi(v),e'}$. Now, we clearly have $\psi(e)=e'$, so that $e'\subseteq \psi(T)$, a contradiction.
\par
So, we need to find a good triple $(T,\phi, \psi)$. We can apply Zorn's Lemma to the natural ordering $\prec$ on good triples (the one given by strict inclusion on the first factor and extension on the other ones), and the only non-trivial thing to show is following claim.
\par
\begin{claim}
 Given a good triple $(U,\phi, \psi)$ where $U$ is a proper subtree of $T$, there exists a good triple $(\overline{U},\overline{\phi},\overline{\psi})\succ (U,\phi, \psi)$.
\end{claim}

Consider a triple $(U,\phi, \psi)$ as above. Let $v$ be a vertex of $T$ which does not lie on $U$, but such that there is an edge $e$ containing $v$ with the other endpoint $w$ on $U$. Set $\overline U=U\cup e$. Also, if $\phi(\gamma_{w,e})=\gamma'_{\psi(w),e'}$, let $v'=\overline{\psi}(v)$ be the endpoint of $e'$ which is not $\psi(w)$ (and set $\overline{\psi}|_{U}= \psi$). Notice that $\overline{\psi}$ still defines an embedding.
\par
We are only left with extending $\phi$ to $\overline{\phi}$. Indeed, this needs to be done only in $Q_v$, where $\phi$ is defined only on $\gamma_{v,e}\times\R$. By $3)$ and the way $Q_v$ is glued to $Q_w$, $\phi$ induces an isometric embedding $\theta:\gamma_{v,e}\to Z'_{\overline{\psi}(v)}$. If we manage to extend $\theta$ to an isometry $\overline{\theta}:Z_v\to Z'_{\overline{\psi}(v)}$, we can then define $\overline{\psi}=\overline{\theta}\times Id_\R$. If we also make sure that $\overline{\theta}$ maps bijectively $\{\gamma_{v,e}\}$ to $\{\gamma'_{v,e}\}$, then all conditions are satisfied. The existence of $\overline{\theta}$ follows from Lemma \ref{theta}.

\end{proof}

\end{document}